\newtheorem{theorem}{Theorem}
\newtheorem{lemma}[theorem]{Lemma}
\newtheorem{definition}[theorem]{Definition}
\newtheorem{proposition}[theorem]{Proposition}
\newtheorem{corollary}[theorem]{Corollary}
\newenvironment{proof}{\noindent \noindent\relax{\sc
     Proof.}}{
     {\hfill$\Box$}\\
     }
\newcommand{\be}{\begin{equation}} \newcommand{\ee}{\end{equation}}
\newcommand{\ba}{\begin{align}} \newcommand{\ea}{\end{align}}
\newcommand{\baa}{\begin{align*}} \newcommand{\eaa}{\end{align*}}
\newcommand{\ben}{\begin{enumerate}} \newcommand{\een}{\end{enumerate}}
\newcommand{\bi}{\begin{itemize}} \newcommand{\ei}{\end{itemize}}
\newcounter{example}
\newcommand{\rd}{\mathrm{d}} 
\newcommand{\rP}{\mathrm{P}} 
\newcommand{\rE}{\mathrm{E}} 
\begin{document}

\title{
Perron-Frobenius theory for kernels and Crump-Mode-Jagers processes with macro-individuals
} 
\author{Serik Sagitov 
\\Chalmers University of Technology and University of Gothenburg
}
\maketitle
\begin{abstract}
Perron-Frobenius theory developed for irreducible non-negative kernels deals with so-called $R$-positive recurrent kernels. If kernel $M$ is $R$-positive recurrent, then the main result determines the limit of the scaled kernel iterations  $R^nM^n$ as $n\to\infty$. In the Nummelin's monograph \cite{N} this important result is proven using a regeneration method whose major focus is on $M$ having an atom. In the special case when $M=P$ is a stochastic kernel with an atom, the regeneration  method has an elegant explanation in terms of an associated split chain. 

In this paper we give a new probabilistic interpretation of the general regeneration method in terms of multi-type Galton-Watson processes producing clusters of particles. Treating clusters as macro-individuals, we arrive at a single-type Crump-Mode-Jagers process with a naturally embedded renewal structure. 
%
%
%
\end{abstract}
Keywords:  irreducible non-negative kernels, multi-type Galton-Watson process, $R$-positive recurrent kernel

\section{Introduction}
A Galton-Watson (GW) process describes random fluctuations of the numbers of independently reproducing particles counted generation-wise, see \cite{AN}. 
Given a measurable type space $(E,\mathcal E)$, the multi-type GW process is defined as a measure-valued Markov chain $\{\Xi_n\}_{n=0}^\infty $, where $\Xi_n(A)$ gives the number of $n$-th generation particles whose types lie in the set $A\in \mathcal E$, see  \cite[Ch 3]{Ha}. 
Given a current state
\[\Xi_n=\sum_{i=1}^{Z_n}\delta_{x_i},\quad \delta_x(A):=1_{\{x\in A\}},\]
where $Z_n=\Xi_n(E)$ is the number of particles in the $n$-th generation and $x_1,x_2,\ldots$ are the types of these particles, the next state of the Markov chain is determined in terms of the offspring to $Z_n$ particles
\[\Xi_{n+1}=\sum_{i=1}^{Z_n}\Xi^{(x_i)}_{i,n},\quad \Xi^{(x_i)}_{i,n}\stackrel{d}{=}\Xi^{(x_i)}.\]
The random measure $\Xi^{(x_i)}_{i,n}$, describing the allocation of a group of siblings over the type space $E$, is assumed to be independent of everything else except for the maternal type  $x_i$.

A key characteristic of the multi-type GW process is its {\it reproduction kernel} $M$ defining the expected number of offspring found in a given subset of the type space
\begin{equation}\label{rke}
M(x,A)=\rE[ \Xi^{(x)}(A)],\quad x\in E,\quad A\in \mathcal E,
\end{equation}
as a function of the maternal type $x$. 
Denote by  $M^n$ the iterations of the reproduction kernel:
\begin{equation}\label{Mndef}
M^0(x,A)=\delta_x(A),\quad  M^{n}(x,A)=\int M^{n-1}(y,A)M(x,\rd y),\quad n\ge1,
\end{equation}
here and elsewhere in this paper, the integrals are taken over the whole type space $E$, unless specified otherwise.
Then, for the multi-type GW process with the initial state $\Xi_0$, we get
$$\rE [\Xi_n(A)]=\int M^n(x,A)\mu_0(\rd x),\qquad \mu_0= \rE [\Xi_0].$$

The asymptotic properties of the multi-type GW  processes are studied on the basis of the Perron-Frobenius theorem dealing with the limiting behaviour of the expectation kernels and producing an asymptotic formula of the form
$$M^n(x,A)\sim \rho^n\dfrac{h(x)\pi(A)}{\int h(y)\pi(dy)},\quad n\to\infty,$$
see \cite[Ch 6]{Mode}. In the classical case of finitely many types, $M$ is a matrix and $\rho$ is its largest, the so-called Perron-Frobenius eigenvalue. Depending on whether $\rho<1$, $\rho=1$, or $\rho>1$, we distinguish among subcritical, critical, or supercritical GW processes.

The Perron-Frobenius theory for the irreducible non-negative kernels is build around the so-called  regeneration method, see  \cite{AN1} and especially \cite{N}. A key step of the regeneration method deals with $M$ having an atom, see Section \ref{SPF} for key definitions. 
%
In the special case, when  $M=P$ is a stochastic kernel with an atom, one can write
\begin{equation}\label{spl}
P(x,A)=p(x,A)+g(x)\gamma(A),
\end{equation}
where $\gamma(E)=1$, $0\le g(x)\le1$ and $p(x,E)=1-g(x)$ for all $x\in E$. The transition probabilities defined by such a kernel $P(x,dy)$ describe  a {\it split chain}, whose transition from a given state $x$ is governed either by $\gamma( \rd y)$ or $p(x, \rd y)/(1-g(x))$ depending on a random outcome of a $g(x)$-coin tossing  \cite[Ch. 4.4]{N}. After each $\gamma$-transition step, the future evolution of the split chain becomes independent from the past and present states, so that the sequence of such regeneration events forms a renewal process with a delay. Then, it remains to apply the basic renewal theory to establish the Perron-Frobenius theorem for stochastic kernels.

In this paper we suggest a probabilistic interpretation of the general regeneration method (when kernel $M$ is not necessarily stochastic) in terms of a certain class of multi-type GW processes which we call GW processes with clusters, see Section \ref{xip}.
In Section  \ref{eCrump-Mode-Jagers } we show that a GW process with clusters has an intrinsic structure of the single-type Crump-Mode-Jagers  (CMJ)  process with discrete time \cite{JS}. 
In Sections  \ref{sec} and \ref{genf} we give a proof of a suitable version of the  Perron-Frobenius theorem for the kernels with an atom, see Theorem \ref{PF}, using the regeneration property of the renewal process embedded into the CMJ  process. Section  \ref{xmpl} 
contains an illuminating example of a  GW process with clusters.


\section{ Irreducible kernels 
} \label{SPF}
In this section we give a summary of basic definitions and results 
presented in \cite{N}, including Theorems 2.1, 5.1, 5.2, 
and Propositions 2.4, 2.8, 3.4.

Consider a measurable type space $(E,\mathcal E)$ assuming that $\sigma$-algebra $\mathcal E$ is countably generated. We denote by $\mathcal M_+$ the set of  $\sigma$-finite measures $\phi$ on $(E,\mathcal E)$, and write $\phi\in\mathcal M^+$ if $\phi\in\mathcal M_+$ and $\phi(E)\in(0,\infty]$.

\begin{definition}
A (non-negative) kernel  on $(E,\mathcal E)$ is a map $M:E\times\mathcal E\to [0,\infty)$ such that for any fixed  $A\in \mathcal E$, the function $M(\cdot,A)$ is measurable, and on the other hand, $M(x,\cdot)\in\mathcal M_+$ for any fixed $x\in E$. For a pair $(x,A)\in(E,\mathcal E)$, we write $x\to A$ if
 \[M^n(x,A)>0 \text{ for some } n\ge1.\]
 Kernel $M$ is called irreducible, if there is such a measure $\phi\in\mathcal M^+$, that for any $x\in E$, we have $x\to A$ whenever $\phi(A)>0$. Measure $\phi$ is then called an irreducibility measure for $M$. 
\end{definition}
 If measure $\phi'\in\mathcal M^+$ is absolutely continuous with respect to an irreducibility measure $\phi$, then $\phi'$ is itself an irreducibility measure.
 For an irreducible kernel $M$, there always exists a {\it maximal irreducible measure}  $\psi$ such that  any other  irreducibility measure $\phi$ is absolutely continuous with respect to $\psi$. 

For an irreducible kernel $M$ with a maximal irreducible measure  $\psi$, there is a decomposition of the form
\begin{equation}\label{m0}
  M^{n_0} (x,A)= m(x,A) + g(x)\gamma(A),\quad \text{for all }x\in E, A\in \mathcal E,
 \end{equation}
where 
\begin{quote}
  $\gamma$ is an irreducibility measure for $M$, \\
 $g$ is a measurable non-negative function such that $\int g(x)\psi(dx)>0$, \\
$m$ is a another kernel on $(E,\mathcal E)$, \\
 $n_0$ is a positive integer number.
\end{quote}
\begin{definition}
 If \eqref{m0} holds with $n_0=1$, so that
\begin{equation}\label{mrg}
  M (x,A)= m(x,A) + g(x)\gamma(A),\quad x\in E,\quad A\in \mathcal E,
 \end{equation}
then the kernel $M$ is said to have an atom $(g,\gamma)$.
\end{definition}
Given \eqref{m0}, put
\begin{equation}\label{eqR}
 F(s)= \sum_{n=1}^\infty F_ns^{n},\quad
F_n= \iint g(y)M^{n-1}(x,\rd y)\gamma(d x).
\end{equation}

\begin{definition}\label{defR}
 Define the convergence parameter $R\in[0,\infty)$ of the irreducible kernel $M$ by
\[F(s)<\infty \text{ for } s<R, \text{ and }F(s)=\infty \text{ for } s>R.\]
If $F(R)<\infty$, then kernel $M$ is called $R$-transient, if $F(R)=\infty$, then kernel $M$ is called $R$-recurrent.
\end{definition}

\begin{definition}
 A non-negative measurable function $h$ which is not identically infinite is called $R$-subinvariant for $M$ if
 \[h(x)\ge R\int h(y)M(x,\rd y),\quad \text{for all } x\in E.\]
 An $R$-subinvariant function is called $R$-invariant if
 \[h(x)= R\int h(y)M(x,\rd y),\quad \text{for all } x\in E.\]
 
 A measure $\pi\in\mathcal M^+$ such that $\int g(y)\pi (dy)\in(0,\infty)$  is called $R$-subinvariant for $M$ if
 \[\pi(A)\ge R\int M(x,A)\pi(dx),\quad \text{for all } A\in \mathcal E.\]
 An $R$-subinvariant meaure is called $R$-invariant if
 \[\pi(A)= R\int M(x,A)\pi(dx),\quad \text{for all } A\in \mathcal E.\]
 \end{definition}
 
 Suppose $M$ is  $R$-recurrent. The function $h$ and the measure $\pi$ defined by 
\begin{align}
h(x)=\sum_{n=1}^\infty R^{nn_0}\int g(y)m^{n-1}(x,\rd y),\qquad \pi(A)=\sum_{n=1}^\infty R^{nn_0}\int m^{n-1}(x,A)\gamma(\rd x)   \label{Lus}
\end{align}
are $R$-invariant  for $M$, scaled in  such a way that
\begin{align}\label{earl}
\int h(x)\gamma(\rd x)= \int g(y)\pi(\rd y)=1.
\end{align}
For any $R$-subinvariant function $\tilde h$ satisfying $\int \tilde h(x)\gamma(\rd x)=1$, we have
\[\tilde h=h\quad \psi\text{-everywhere} \quad \text{and}\quad \tilde h\ge h\quad \text{everywhere}.\]
The measure $\pi$ is the unique $R$-subinvariant  measure satisfying \eqref{earl}.

\begin{definition}\label{posi}
An  $R$-recurrent kernel $M$ is called $R$-positive recurrent if the $R$-invariant function and measure $(h,\pi)$ satisfy $ \int h(y)\pi(dy)<\infty$. If $ \int h(y)\pi(dy)=\infty$, then $M$ is called $R$-null recurrent. 
\end{definition}

\begin{definition}
 Kernel $M$ has period $d$ if $d$ is the smallest positive integer such that there is a sequence of non-empty disjoint sets $(D_0,D_1,\ldots D_{d-1})$ having the following property
 \[ \text{if } x\in D_i, \text{ then } M(x,E\setminus D_j)=0 \quad \text{for }j=i+1\ ({\rm mod}\ d), \quad\ i=0,\ldots, d-1.\]
 We call kernel $M$ aperiodic if its period $d=1$.
\end{definition}
In the periodic case with $d\ge2$, provided $M$ is irreducible and satisfies \eqref{m0}, there is an index $i$, $0\le i\le d-1$, such that $g=0$ over all $D_j$ except $D_i$. Furthermore,
\[\gamma(E\setminus D_j)=0 \text{ for } j=i+n_0\ ({\rm mod}\ d).\]

\section{ GW processes with clusters
} \label{xip}
As will be explained later in this section, the following definition yields the above mentioned split chain construction in the particular case when $\rP(\Xi^{(x)}(E)=1)=1$.
\begin{definition}\label{split}
Consider a multi-type GW process $\{Z_n\}_{n=0}^\infty $ whose reproduction measure can be decomposed into a sum of a random number of integer-valued random measures
 \begin{equation}\label{deco}
\Xi^{(x)}=\xi^{(x)}+\sum_{i=1}^{N^{(x)}}\tau_i.
 \end{equation}
Let each $\tau_i$ be independent of everything else and have a common distribution $ \tau_i\stackrel{d}=\tau$.

(i) Such a multi-type GW process will be called a GW process with clusters.  

(ii) Each group of particles behind a measure $\tau_i$ in \eqref{deco} will be called a cluster, so that $N^{(x)}$ gives the number of clusters produced by a single particle of type $x$. Simple clusters correspond to the case $\rP(\tau(E)=1)=1$.

(iii) A multi-type GW process 
with the reproduction measure $\xi^{(x)}$ 
will be called a stem process.
\end{definition}
Given  \eqref{deco} and  
\begin{equation}\label{decc}
\rE \xi^{(x)}(A)=m(x,A),\quad \rE N^{(x)}=g(x),\quad \rE \tau(A)=\gamma(A),
 \end{equation}
by the total expectation formula, we see that the kernel  \eqref{rke} satisfies \eqref{mrg}. 
Note that we allow for dependence between $\xi^{(x)}$ and $N^{(x)}$.
Definition \ref{split} puts no restrictions on the reproduction kernel $m$ of the stem process. 
The example from Section \ref{xmpl} presents a case with $E=[0, \infty)$, where the kernel $m$ is reducible, in that for any ordered pair of types $(x,y)$, where $x<y$, type $x$ particles (within the stem process) may produce type $y$ particles but not otherwise.

Consider a GW process with simple clusters  such that 
\begin{align*}
& \rP(\xi^{(x)}(E)=0)=g(x),\quad \rP(\xi^{(x)}(E)=1)=1-g(x), \quad g(x)\in[0,1],\\
 &N^{(x)}=1_{\{\xi^{(x)}(E)=0\}},\quad x\in E,\\
 &\tau=\delta_{Y},\quad \rP(Y\in A)=\gamma(A).
\end{align*}
In this case each particle produces exactly one offspring, and the GW process tracks  the type of the regenerating particle. Using \eqref{mrg}, we find that $M=P$ is a stochastic kernel satisfying \eqref{spl} with 
$$p(x,A)=(1-g(x))\rP(\xi^{(x)}(A)=1|\xi^{(x)}(E)=1).$$
As a result we get a split chain corresponding to a stochastic kernel. Notice that the associated stem process is a pure death multi-type GW process.

An important family of GW processes with simple clusters is formed by  linear-fractional multi-type GW processes, see \cite{LS, Sa}. This family is framed by the following additional conditions 
\begin{align*}
& \rP(\xi^{(x)}(E)=0)+ \rP(\xi^{(x)}(E)=1)=1,\\
 &N^{(x)}=N\cdot 1_{\{\xi^{(x)}(E)=1\}},\qquad \text{ where $N$ has a geometric distribution},\\
 &\tau=\delta_{Y}.
\end{align*}
In this case \eqref{mrg} holds with
\[m(x,A)=\rP(\xi^{(x)}(A)=1),\quad g(x)=\rE N\cdot \rP(\xi^{(x)}(A)=1),\quad \gamma(A)=\rP(Y\in A).\]
Here again, the stem process is a pure death multi-type GW process. 

\section{Embedded CMJ process
} \label{eCrump-Mode-Jagers }

The key assumption of Definition \ref{split} guarantees that the procreation of particles constituting a cluster is independent of the other parts of the GW process with clusters. The main idea of this paper is to treat each cluster  as a newborn CMJ individual, which reminds the construction of macro-individuals in the sibling dependence setting of \cite{Ol}.  

Consider the stem process starting from a single cluster at time 0 and denote by  $L\in[1,\infty]$ its extinction time. Put $X_0=1$ and let 
$X_n$
stand for the number of new clusters generated at time $n$ by the particles in the stem process born at time $n-1$, $n\ge1$. Observe that
\begin{align*}
f_n&:= \rE(X_n)=\iint g(y)m^{n-1}(x,d y)\gamma(d x).
\end{align*}
We treat the random vector  $(X_1,\ldots, X_{L})$ as the life record of the initial individual in an embedded CMJ process, see Figure \ref{F1}. A CMJ individual during its life of length $L$ at different ages produces random numbers of offspring, cf \cite{JS}. Such independently reproducing CMJ individuals build a population with overlapping generations (in contrast to GW particles living one unit of time, so that there is no time  overlap between generations).

\begin{figure}
\centering
 \includegraphics[height=7cm]{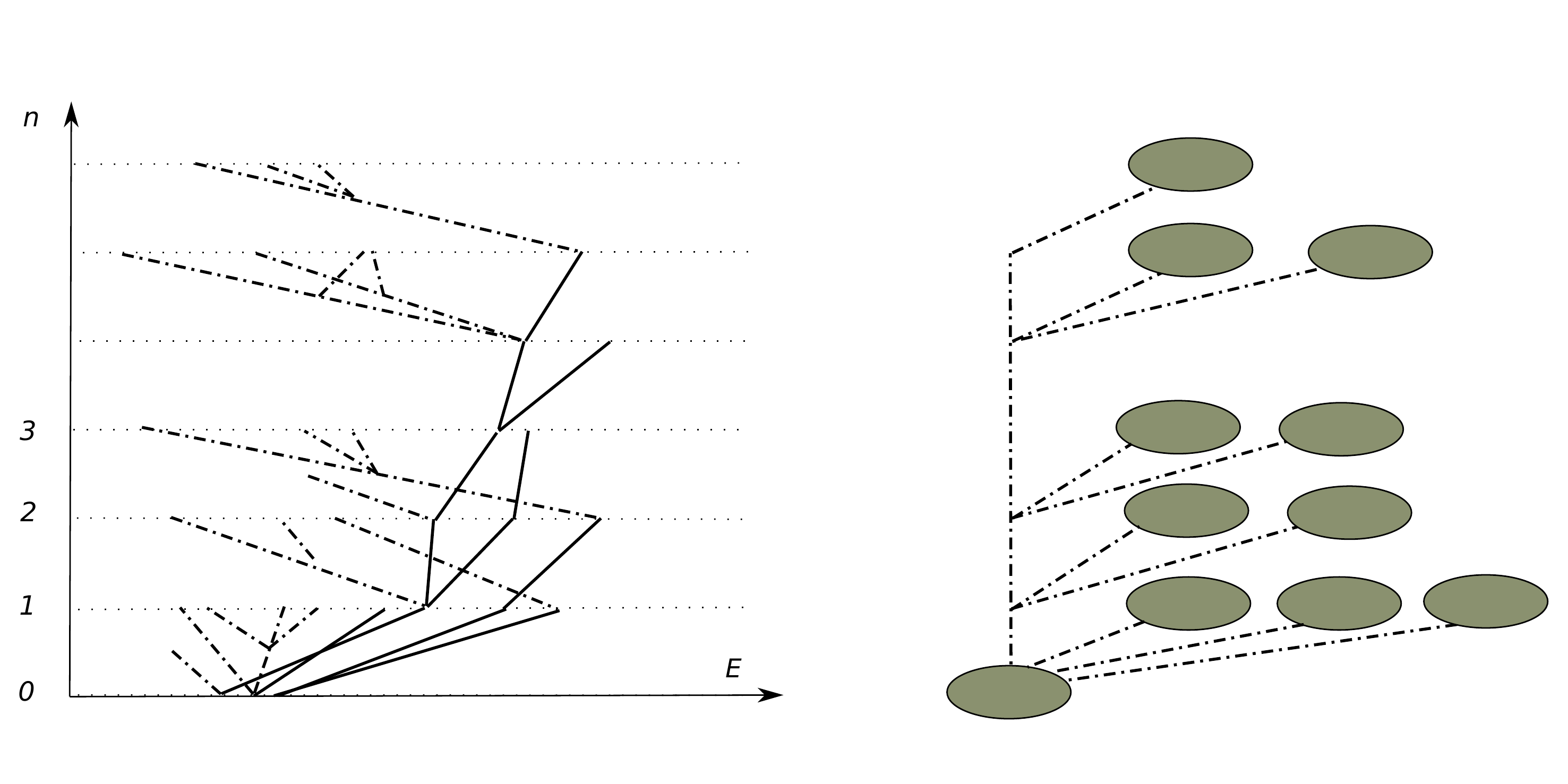}  
 \caption{Embedding a CMJ individual  into  a multi-type GW process stemming from a single cluster of size $Z_0=3$.  
 {\it Left panel}. 
 Solid lines represent the lineages of the stem process which dies out by time $L=6$.
Dashed lines delineate the daughter clusters directly generated by  the stem process. We see that $X_1=3$ with $\tau_1(E)=0$, $\tau_2(E)=1$, $\tau_3(E)=3$.
{\it Right panel}. The summary of the individual  life: 
$(X_1,\ldots,X_{L})=(3,2,2,0,2,1)$.
}
\label{F1}
\end{figure}

Throughout this paper we assume
\begin{equation}\label{rpo}
f(s_0)\in( 0 , \infty) \text{ for some }s_0>0,  \text{ where }f(s)= \sum_{n=1}^\infty f_ns^{n},
\end{equation}
so that on one hand, that $ f_n>0$ for some $n\ge1$, and on the other hand, 
the radius of convergence 
\[r= \inf \{ s \ge 0 \colon f(s) = \infty \}\]
 is positive.
The assumption $r>0$ prohibits very fast growing sequences of the type $f_n=e^{n^2}$.
\begin{definition}\label{dR}
Given  \eqref{rpo}, define a parameter $R\in(0,\infty)$ as  $R=r$ if  $f(r)<1$, and as the unique positive solution of the equation $f(R)=1$ if $f(r)\ge1$. 
\end{definition}
Since $f(R)\le1$, the sequence $(f_nR^{n})$ can be viewed as a (possibly defective) distribution on the lattice $\{1,2,\ldots\}$. This is the distribution of the inter-arrival time for the renewal process naturally embedded into the CMJ process defined above. 
The renewal process is interpreted as the consecutive ages at childbearing as one tracks a single ancestral lineage backwards in time.
 Given $f(R)=1$,  the mean inter-arrival time for the embedded renewal process equals
\[\sum_{n = 1}^\infty nf_nR^{n}=Rf'(R),\]
and is interpreted as the average age at childbearing or  the mean generation length for the CMJ process, see
\cite{J}.

Focussing on the current waiting time of such a discrete renewal process, we get an irreducible  Markov chain with the state space $\{0, 1,\ldots\}$. The following observation concerning this Markov chain  is straightforward. 
\begin{proposition}
The embedded renewal process is transient if $f(r)<1$, and  recurrent if  $f(r)\ge1$. Let $R$ be defined by Definition \ref{dR}. If $f(r)> 1$, then  $R\in(0,r)$, $f'(R)<\infty$, and the embedded renewal process is positive recurrent.
If $f(r)=1$, then the embedded renewal process is either positive recurrent or null recurrent depending on whether $f'(r)<\infty$ or $f'(r)=\infty$.
\end{proposition}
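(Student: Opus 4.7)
The plan is to reduce the proposition to the standard trichotomy for discrete renewal processes on $\{1,2,\ldots\}$: the walk is transient when the inter-arrival distribution has total mass $<1$, recurrent when total mass equals $1$, and in the recurrent case positive vs.\ null recurrence is decided by the finiteness of the mean. The text preceding the proposition has already identified the inter-arrival distribution of the embedded renewal process as $p_n=f_nR^n$, with total mass $f(R)$ and mean $Rf'(R)$, so everything reduces to analyzing these two quantities in terms of the power series $f$ and the definition of $R$.

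By \eqref{rpo}, the series $f(s)=\sum_{n\ge1}f_ns^n$ has positive radius of convergence $r$ and non-negative coefficients not all zero, hence it is continuous and strictly increasing on $[0,r]$ with $f(0)=0$; continuity at the right endpoint follows by monotone convergence applied to the partial sums (or Abel's theorem if $f(r)<\infty$). I would then split into the two cases of Definition \ref{dR}. If $f(r)<1$, then $R=r$ and $f(R)=f(r)<1$, so the walk is transient. If $f(r)\ge1$, the intermediate value theorem produces a (necessarily unique, by strict monotonicity) $R\in(0,r]$ with $f(R)=1$, so the walk is recurrent.

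In the recurrent regime, suppose first $f(r)>1$. Then the strict inequality $f(R)=1<f(r)$ forces $R<r$, so $R$ lies strictly inside the open interval of convergence, $f$ is real-analytic at $R$, and $f'(R)<\infty$; hence $Rf'(R)<\infty$ and positive recurrence follows. If instead $f(r)=1$, then $R=r$ and the dichotomy between positive and null recurrence reduces directly to whether $f'(r)<\infty$ or $f'(r)=\infty$, as asserted.

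I do not anticipate a genuine obstacle: once the identification of $(f_nR^n)_{n\ge1}$ as the inter-arrival law is accepted, the argument is essentially bookkeeping on the power series $f$ together with the definition of $R$. The only mildly delicate point is the use of real-analyticity of $f$ strictly inside its interval of convergence to conclude $f'(R)<\infty$ when $R<r$; this is standard and requires no extra hypothesis beyond \eqref{rpo}.
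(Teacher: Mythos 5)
Your argument is correct and is exactly the ``straightforward observation'' the paper has in mind: the paper states this proposition without proof, and the intended justification is precisely your reduction to the standard renewal trichotomy for the (possibly defective) inter-arrival law $(f_nR^n)$, combined with strict monotonicity of $f$ and analyticity of $f$ strictly inside its disc of convergence to get $f'(R)<\infty$ when $f(r)>1$. Nothing further is needed.
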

Let $W_n$ be the number of newborn individuals at time $n$ in the embedded CMJ process started from a single newborn individual, or in other words, the total number of clusters emerging at time $n$ in the original GW process starting from a single cluster. Clearly,
\begin{align*}
F_n&:= \rE(W_n)= \iint g(y)M^{n-1}(x,\rd y)\gamma(d x).
\end{align*}
\begin{theorem}\label{iv}
Consider a kernel $M$ with atom $(g,\gamma)$. Parameter $R$ from Definition \ref{dR} coincides with the convergence parameter of the kernel $M$.  Moreover,

(i) if $f(r)<1$, then $R=r$, $f(R)<1$, and $F(R)<\infty$, so that $M$ is $R$-transient,

(ii) if $f(r)\ge1$, then $f(R)=1$ and $F(R)=\infty$, so that $M$ is $R$-recurrent,

(iii) if $f(R)=1$, then either $f'(R)=\infty$ so that $M$ is $R$-null recurrent, or
$f'(R)\in(0,\infty)$, so that $M$ is $R$-positive recurrent.
\end{theorem}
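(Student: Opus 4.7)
The plan is to derive a single renewal-type identity linking $f$ and $F$, and then read off the three cases of the theorem as statements about $F(R)$ and about the integral $\int h\,d\pi$.

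First I would unfold the atom structure. Writing $(g\otimes\gamma)(x,A) := g(x)\gamma(A)$, the identity $M = m + g\otimes\gamma$ gives by induction
\begin{equation*}
 M^n = m^n + \sum_{k=0}^{n-1} m^k \cdot (g\otimes\gamma)\cdot M^{n-1-k}.
\end{equation*}
Integrating $\gamma$ on the left and $g$ on the right, and using the identity $\gamma\, m^a m^b\, g = f_{a+b+1}$, this recursion unfolds into a sum over chains of $\gamma$-atoms:
\begin{equation*}
 F_n = \sum_{j\ge 1}\ \sum_{k_1+\dots+k_j=n,\ k_i\ge 1} f_{k_1}\cdots f_{k_j},\qquad F(s) = \sum_{j\ge 1} f(s)^{j} = \frac{f(s)}{1-f(s)}\ \text{when}\ f(s)<1,
\end{equation*}
with $F(s)=\infty$ as soon as $f(s)\ge 1$. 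All rearrangements are legal by non-negativity. Probabilistically this is the statement, already used to define $R$, that a cluster appearing at time $n$ in the CMJ is reached through a chain of direct cluster-births whose inter-birth ages $k_1,\ldots,k_j$ sum to $n$.

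With this identity, parts (i) and (ii) are immediate. If $f(r)<1$ then $R=r$ by Definition \ref{dR}, so $F(R)=f(R)/(1-f(R))<\infty$, while $f(s)=\infty$ for $s>r$ forces $F(s)=\infty$ there; hence $M$ is $R$-transient with convergence parameter $R$. If $f(r)\ge 1$, continuity and strict monotonicity of $f$ on $(0,r)$ — following from \eqref{rpo} — produce a unique $R\in(0,r]$ with $f(R)=1$, and the geometric formula yields $F(s)<\infty$ for $s<R$, $F(R)=\infty$, and $F(s)=\infty$ for $s>R$; hence $M$ is $R$-recurrent with convergence parameter $R$.

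For (iii) I would plug the explicit $R$-invariant pair from \eqref{Lus} (with $n_0=1$) into $\int h\,d\pi$. Using $m^{n-1}\,m^{k-1} = m^{n+k-2}$ and reindexing by $N=n+k-1$, I expect
\begin{equation*}
 \int h(y)\pi(dy) = \sum_{n,k\ge 1} R^{n+k}\, f_{n+k-1} = R\sum_{N\ge 1} N\, f_N R^{N} = R^{2}\, f'(R),
\end{equation*}
so Definition \ref{posi} gives $R$-positive recurrence precisely when $f'(R)<\infty$ and $R$-null recurrence otherwise. The only genuine care required is in the series bookkeeping, but all summands are non-negative so Fubini--Tonelli handles the rearrangements; there is no substantive obstacle beyond keeping indices straight.
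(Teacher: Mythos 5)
Your proposal is correct and takes essentially the same route as the paper: the renewal identity $F(s)=f(s)/(1-f(s))$ for $f(s)<1$ (which the paper obtains from the law-of-total-expectation recursion $F_n=f_n+f_{n-1}F_1+\dots+f_1F_{n-1}$ and you obtain by unfolding $M=m+g\otimes\gamma$ into chains of atom visits --- an immaterial difference), followed for part (iii) by the computation $\int h(y)\pi(\rd y)=R^2f'(R)$, which is exactly the paper's identity $\int h_s(y)\pi_s(\rd y)=s^2f'(s)$ evaluated at $s=R$ and then combined with Definition \ref{posi}.
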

\begin{proof}
Using the law of total expectation it is easy to justify the following recursion
$$F_n=f_n+f_{n-1}F_1+\ldots+f_1F_{n-1}.$$
This leads to the equality for generating functions 
$$F(s)=f(s)+f(s)F(s),$$
which yields
\begin{equation}\label{kFf}
 F(s)={f(s)\over 1-f(s)}\quad \text{for $s$ such that } f(s)<1.
\end{equation}
From here and in view of Definition \ref{defR}, it is obvious that the first statement is valid. Parts (i) and (ii) follow immediately. Part (iii) is proven in Section \ref{sec}.
\end{proof}

\noindent{\bf Remark.
} 
For a general starting configuration of particles $Z_0$, putting $\mu_0=\rE Z_0$, we get
\begin{align*}
\tilde f_n&:=\rE X_n=\iint g(y)m^{n-1}(x,d y)\mu_0(d x),\\
\tilde F_n&:= \rE Y_n=\iint g(y)M^{n-1}(x,\rd y)\mu_0(d x).
\end{align*}
The corresponding generating functions 
$$\tilde f(s)= \sum_{n=1}^\infty \tilde f_ns^{n},\quad \tilde F(s)= \sum_{n=1}^\infty \tilde F_ns^{n},$$ 
are connected by
\begin{equation}\label{tFf}
\tilde  F(s)={\tilde f(s)\over 1-f(s)}\quad \text{for $s$ such that } f(s)<1.
\end{equation}
(To obtain this relation, observe that
$$\tilde F_n=\tilde f_n+\tilde f_{n-1}F_1+\ldots+\tilde f_1F_{n-1},$$
which gives $\tilde F(s)=\tilde f(s)(1+F(s))$, and it remains to apply  \eqref{kFf}.)

 As mentioned above, under the special initial condition  $Z_0\stackrel{d}{=}\tau$, the embedded CMJ process starts from a single newborn individual. For a general $Z_0$, the embedded CMJ process has an immigration component characterised by the generating function $\tilde f(s)$. By immigration we mean  the inflow of new clusters generated by the stem process starting from $Z_0$ particles.
%
%
%
%

\section{Null and positive recurrence of a kernel with atom}\label{sec}

Consider a non-negative kernel $M$ with atom  $(g,\gamma)$, and put
\[M_s(x,A)=\sum_{n=1}^\infty s^nM^{n-1}(x,A),\quad m_s(x,A)=\sum_{n=1}^\infty s^nm^{n-1}(x,A),\quad s\ge0,\]
so that the earlier introduced generating functions $F$ and $f$ can be presented as
 \begin{align*}
F(s)=\iint g(y)M_s(x,\rd y)\gamma(\rd x),\quad f(s)=\iint g(y)m_s(x,\rd y)\gamma(\rd x).
\end{align*}
Denote
\begin{align}
h_s(x)=\int g(y)m_s(x,\rd y),\qquad \pi_s(A)=\int m_s(x,A)\gamma(\rd x),\label{Kus}
\end{align}
and observe that 
\begin{align*}
\int h_s(x)\gamma(\rd x)= \int g(y)\pi_s(\rd y)=f(s),\quad  \int h_s(y)\pi_s(\rd y)=s^2f'(s).
\end{align*}
The latter equality requires the following argument
\begin{align*}
 \int h_s(x)\pi_s(\rd x)&=\iiint   g(y)m_s(x,\rd y)m_s(z,\rd x)\gamma(\rd z)
 \\&=\iint   g(y)m_s^2(z,\rd y)\gamma(\rd z) =\sum_{n=1}^\infty ns^{n+1}f_{n}=s^2f'(s),
\end{align*}
where we used the relation
\begin{align*}
s^{-2}m_s^2(y,A)&= \int s^{-1}m_s(x,A)s^{-1}m_s(y,\rd x)=\sum_{n=0}^\infty\sum_{k=0}^\infty\int  s^n m^{n}(x,A)s^km^{k}(y,\rd x)\\
 &=\sum_{n=0}^\infty\sum_{k=0}^\infty s^{n+k}m^{n+k}(y,A)=\sum_{j=0}^\infty (j+1)s^{j}m^{j}(y,A).
\end{align*}

\begin{lemma}\label{eige}
Consider a kernel with atom \eqref{mrg}. If  a positive $s$ is such that $f(s)\le1$, then  the function $h_s$ and the measure $\pi_s$, defined by \eqref{Kus}, satisfy
 \begin{align}
\int h_s(y)M(x,\rd y)&=s^{-1}h_s(x)-(1-f(s))g(x),\label{st2}\\
\int M(y,A)\pi_s(\rd y)&=s^{-1}\pi_s(A)-(1-f(s))\gamma(A),\label{st1}
\end{align}
so that they are $s$-subinvariant function and measure for the kernel $M$.
\end{lemma}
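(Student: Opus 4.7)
The plan is to verify both identities by direct computation, using the decomposition $M = m + g\otimes\gamma$ together with the telescoping that arises from shifting the index in the power series defining $h_s$ and $\pi_s$; the $s$-subinvariance assertion is then an immediate consequence of $f(s)\le 1$.

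For the first identity, I would split
\[
\int h_s(y)M(x,\rd y)=\int h_s(y)m(x,\rd y)+g(x)\int h_s(y)\gamma(\rd y).
\]
The second piece is $g(x)f(s)$, since $\int h_s(y)\gamma(\rd y)=f(s)$ was recorded just before the lemma. For the first piece, I expand $h_s$, interchange sum and integrals (all terms are non-negative, so Tonelli applies), and use $\int m^{n-1}(y,\cdot)m(x,\rd y)=m^n(x,\cdot)$ to obtain $\sum_{n\ge 1}s^n\int g(z)m^n(x,\rd z)$. Reindexing $k=n+1$ and isolating the $n=1$ term of $h_s(x)$, which equals $sg(x)$, gives $s^{-1}h_s(x)-g(x)$. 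Adding the two pieces produces $s^{-1}h_s(x)-(1-f(s))g(x)$, which is \eqref{st2}.

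The argument for \eqref{st1} is completely parallel: write
\[
\int M(y,A)\pi_s(\rd y)=\int m(y,A)\pi_s(\rd y)+\gamma(A)\int g(y)\pi_s(\rd y),
\]
with the second term equal to $\gamma(A)f(s)$. Expanding $\pi_s$ and using $\int m(y,A)m^{n-1}(z,\rd y)=m^n(z,A)$ yields $\sum_{n\ge 1}s^n\int m^n(z,A)\gamma(\rd z)$; shifting the index and subtracting the $n=1$ term $s\gamma(A)$ of $\pi_s(A)$ gives $s^{-1}\pi_s(A)-\gamma(A)$. Combining yields \eqref{st1}.

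Finally, multiplying \eqref{st2} by $s>0$ gives
\[
s\int h_s(y)M(x,\rd y)=h_s(x)-s(1-f(s))g(x)\le h_s(x),
\]
since $f(s)\le 1$ and $g\ge 0$, which is exactly $s$-subinvariance of $h_s$; the corresponding inequality for $\pi_s$ follows the same way from \eqref{st1}. There is no real obstacle here, only bookkeeping: the one place that deserves care is the interchange of summation and integration in the series defining $h_s$ and $\pi_s$, which is legitimate by non-negativity, and the separation of the initial ($n=1$) term of each series before reindexing.
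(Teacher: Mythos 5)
Your proof is correct and follows essentially the same route as the paper: decompose $M=m+g\otimes\gamma$, use $\int m^{n-1}(y,\cdot)m(x,\rd y)=m^n(x,\cdot)$, and shift the index in the defining series, with the only cosmetic difference that the paper first derives the kernel-level identities for $\int m_s(y,A)M(x,\rd y)$ and $\int M(y,A)m_s(x,\rd y)$ and then integrates against $g$ and $\gamma$, while you work with $h_s$ and $\pi_s$ directly. The bookkeeping (the $g(x)f(s)$ and $\gamma(A)f(s)$ terms, the isolated $n=1$ terms $sg(x)$ and $s\gamma(A)$, and the final subinvariance inequality from $f(s)\le1$) all checks out.
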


\begin{proof}
By \eqref{mrg}, we have
 \begin{align*}
\int m_s(y,A)M(x,\rd y)&=\sum_{n=1}^\infty s^{n}m^{n}(x,A)+g(x)\int m_s(y,A)\gamma(\rd y)\\
&=s^{-1}m_s(x,A)-\delta_x(A)+g(x)\pi_s(A),
\end{align*}
which implies relation \eqref{st2}:
 \begin{align*}
\int h_s(y)M(x,\rd y)&=\iint g(w)m_s(y,\rd w)M(x,\rd y)=s^{-1}h_s(x)-g(x)+g(x)f(s).
\end{align*}
Similarly, from
 \begin{align*}
\int M(y,A)m_s(x,\rd y)&=\sum_{n=1}^\infty s^{n}m^{n}(x,A)+\gamma(A)\int g(y)m_s(x,\rd y) \\
&=s^{-1}m_s(x,A)-\delta_x(A)+\gamma(A)h_s(x),
\end{align*}
we  arrive at relation \eqref{st1}.
\end{proof}

Lemma \ref{eige} yields the following statement which in turn provides the proof of part (iii) of Theorem \ref{iv} (recall Definition \ref{posi}). 
\begin{corollary}
Consider an $R$-recurrent kernel $M$ with atom $(g,\gamma)$.
  If  $f(R)=1$, then $h=h_R$ and $\pi=\pi_R$ are $R$-invariant  function and measure satisfying relation
 \eqref{Lus} with $n_0=1$, relation \eqref{earl}, as well as
$$
\int h(y)\pi(\rd y)=R^2f'(R).
$$
%
\end{corollary}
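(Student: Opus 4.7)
The proof is essentially a specialization of Lemma \ref{eige} at $s=R$, so the plan is to verify each claim of the corollary in turn by plugging $s=R$ into the identities already assembled in Section \ref{sec}.

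First, I would set $s=R$ in Lemma \ref{eige}. The hypothesis $f(R)=1$ makes the correction terms $(1-f(s))g(x)$ and $(1-f(s))\gamma(A)$ vanish in \eqref{st2} and \eqref{st1}, so those identities become
\baa
\int h_R(y)M(x,\rd y)=R^{-1}h_R(x),\qquad \int M(y,A)\pi_R(\rd y)=R^{-1}\pi_R(A),
\eaa
which is the $R$-invariance of $h:=h_R$ and $\pi:=\pi_R$. To confirm these are legitimate $R$-invariant objects in the sense of the earlier definitions, I would note that the preamble to Lemma \ref{eige} already gives $\int h_R(x)\gamma(\rd x)=\int g(y)\pi_R(\rd y)=f(R)=1$, which supplies both the normalisation \eqref{earl} and the non-triviality: $h$ is $\gamma$-integrable, hence not identically infinite, and $\pi\in\mathcal M^+$ with $\int g\,\rd\pi=1\in(0,\infty)$.

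Next I would verify that $(h,\pi)$ coincides with the formula \eqref{Lus} at $n_0=1$. This is a direct unwinding of the definitions: substituting the series expansion of $m_R$ from the start of Section \ref{sec} into \eqref{Kus} gives
\baa
h_R(x)=\sum_{n=1}^\infty R^n\int g(y)m^{n-1}(x,\rd y),\qquad \pi_R(A)=\sum_{n=1}^\infty R^n\int m^{n-1}(x,A)\gamma(\rd x),
\eaa
which is exactly \eqref{Lus} with $n_0=1$.

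Finally, for the identity $\int h(y)\pi(\rd y)=R^2 f'(R)$, I would simply invoke the computation carried out just above Lemma \ref{eige}, which established $\int h_s(y)\pi_s(\rd y)=s^2 f'(s)$ for all admissible $s$, and evaluate at $s=R$. The main obstacle, such as it is, is really just bookkeeping: one has to check that the $R$-invariant pair produced here matches the one singled out in \eqref{Lus}--\eqref{earl} of Section \ref{SPF}, so that the uniqueness statements there justify calling these \emph{the} $R$-invariant function and measure; but since \eqref{earl} is exactly the normalisation we have already verified, no further work is needed.
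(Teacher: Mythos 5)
Your proposal is correct and follows exactly the route the paper intends: the corollary is presented as an immediate consequence of Lemma \ref{eige} at $s=R$ (where the $(1-f(s))$ terms vanish) together with the identities $\int h_s\,\rd\gamma=\int g\,\rd\pi_s=f(s)$ and $\int h_s\,\rd\pi_s=s^2f'(s)$ computed just before that lemma. Nothing is missing.
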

Observe that 
\begin{equation}\label{hx}
 h(x)=\sum_{n=1}^\infty R^n\int g(y)m^{n-1}(x,d y)
\end{equation}
is the expected $R$-discounted number of clusters ever produced by the stem process starting from a single particle of type $x$.
From this angle, $h(x)$ can be interpreted as  the reproductive value of type $x$.
On the other hand, 
\begin{equation}\label{pia}
\pi(A)=\sum_{n=1}^\infty R^n\int m^{n-1}(x,A)\gamma(\rd x).
\end{equation}
 is the expected $R$-discounted number of particles whose type belongs to $A$ and which appear in the stem process starting from a single cluster of particles.
As shown next, see Theorem \ref{PF}, the measure $\pi$ can be viewed as an asymptotically stable distribution for  the types of particles in the GW process with clusters.

\section{Perron-Frobenius theorem for kernels with atom}\label{genf}
\begin{theorem}\label{PF}
Consider an aperiodic $R$-positive recurrent kernel $M$ with atom $(g,\gamma)$. Let $h$ and $\pi$ be given by \eqref{hx} and \eqref{pia}. If $(x,A)$ are such that
\begin{equation}\label{extra}
R^{n}m^n(x,A)\to0,\quad n\to\infty,
\end{equation}
 then 
\begin{equation}\label{pefr}
 R^{n}M^n(x,A)\to \dfrac {h(x)\pi(A)}{R^2f'(R)} ,\quad n\to\infty.
\end{equation}
If $h(x)<\infty$, then condition \eqref{extra} holds for any $A$ such that 
\begin{equation}\label{eps}
A\subset\{y: g(y)\ge\epsilon\} \text{ for some }\epsilon>0.
\end{equation}
\end{theorem}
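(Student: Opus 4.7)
The plan is to reduce the Perron--Frobenius statement to the discrete renewal theorem, using the CMJ/renewal structure developed in Sections \ref{xip}--\ref{eCrump-Mode-Jagers }. The starting point is a ``last atom activation'' decomposition of $M^n$: writing $M=m+g\otimes\gamma$ and iterating, an induction on $n$ yields
\[M^n(x,A)=m^n(x,A)+\sum_{k=1}^n h_{k-1}(x)\,G_{n-k}(A),\]
where $h_k(x):=\int g(y)m^k(x,\rd y)$ and $G_k(A):=\int M^k(z,A)\gamma(\rd z)$, with $G_0(A)=\gamma(A)$. Integrating this decomposition against $\gamma$ in $x$ gives the scalar discrete recursion
\[G_n(A)=\pi^{(n)}(A)+\sum_{k=1}^n f_k\,G_{n-k}(A),\qquad \pi^{(n)}(A):=\int m^n(z,A)\gamma(\rd z).\]

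Multiplying by $R^n$ turns this into a proper renewal equation with inter-arrival law $(R^kf_k)_{k\ge1}$. $R$-positive recurrence gives $f(R)=1$ and finite mean $Rf'(R)$; aperiodicity of $M$ transfers to aperiodicity of this law via the periodicity characterisation in Section \ref{SPF}. For $A\subset\{g\ge\epsilon\}$ one has the a priori bound $\epsilon\,\pi(A)\le\int g\,\rd\pi=1<\infty$, so the driving sequence $R^n\pi^{(n)}(A)$ is summable with sum $\pi(A)/R$, and the discrete renewal theorem delivers
\[R^nG_n(A)\longrightarrow\frac{\pi(A)}{R^2f'(R)}.\]

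Scaling the first decomposition by $R^n$ and shifting indices,
\[R^nM^n(x,A)=R^nm^n(x,A)+\sum_{j=0}^{n-1}R^{j+1}h_j(x)\cdot R^{n-1-j}G_{n-1-j}(A).\]
The first term vanishes by hypothesis \eqref{extra}. The sum is a convolution of the summable sequence $(R^{j+1}h_j(x))_{j\ge0}$, which totals $h(x)$, against the bounded convergent sequence $R^kG_k(A)$; a standard Toeplitz/dominated-convergence splitting at an index $N$ then delivers $h(x)\pi(A)/(R^2f'(R))$, completing the main assertion. For the sufficient condition, note that $A\subset\{g\ge\epsilon\}$ implies $h_{n-1}(x)\ge\epsilon\,m^{n-1}(x,A)$, so
\[h(x)\ge\epsilon R\sum_{n=0}^\infty R^nm^n(x,A),\]
and finiteness of $h(x)$ forces $R^nm^n(x,A)\to 0$.

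The step I expect to be most delicate is the clean invocation of the discrete renewal theorem: verifying aperiodicity of $(R^kf_k)$ from that of $M$ using the structure of $(D_0,\ldots,D_{d-1})$ in Section \ref{SPF}, and ensuring that the driving sequence $R^n\pi^{(n)}(A)$ is summable for the set $A$ at hand. On the ``safe'' class $A\subset\{g\ge\epsilon\}$ summability is free, and the general case of an $A$ merely satisfying \eqref{extra} should reduce to it by monotone approximation from inside by such safe subsets. The Toeplitz passage in the final step, by contrast, is routine once $R^nG_n(A)$ has been shown to be bounded via the renewal limit.
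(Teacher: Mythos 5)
Your proposal is correct and follows essentially the same route as the paper: your ``last atom activation'' decomposition is exactly the recursion the paper uses to derive $M_s=m_s+h_s\pi_s/(1-f(s))$, your invocation of the discrete renewal theorem is the paper's Lemma \ref{Fe} (applied in two stages, first to $G_n$ and then via a Toeplitz convolution with $(R^{j+1}h_j(x))$, rather than once to the product $h_{\hat s}(x)\pi_{\hat s}(A)/(1-f(sR))$, which is only a cosmetic reorganisation), and your verification of the sufficient condition via $h(x)\ge\epsilon\sum_n R^nm^{n-1}(x,A)$ is identical to the paper's. The only loose end you flag, summability of $R^n\pi^{(n)}(A)$ for a general $A$ satisfying \eqref{extra}, is the same finiteness issue ($b(1)=h(x)\pi(A)<\infty$) implicit in the paper's application of Lemma \ref{Fe}, so it does not constitute a divergence from, or a gap relative to, the published argument.
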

To prove this result we need two lemmas. In the end of this section we give a remark addressing condition \eqref{extra}.

\begin{lemma}
Consider a kernel $M$ with atom $(g,\gamma)$.  If  $s>0$ is such that $f(s)<1$, then  
 \begin{align}
M_s(x,A)&=m_s(x,A)+{h_s(x)\pi_s(A)\over 1-f(s)}\quad \text{for all }x\in E, A\in\mathcal E. \label{Ks}
\end{align}
\end{lemma}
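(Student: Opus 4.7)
The plan is to derive a linear functional equation for $M_s$ using the atom decomposition $M=m+g\otimes\gamma$ (shorthand for $M(x,\rd y)=m(x,\rd y)+g(x)\gamma(\rd y)$), reduce it to a scalar equation for the auxiliary measure $\alpha_s(A):=\int M_s(y,A)\gamma(\rd y)$, and solve it using the hypothesis $f(s)<1$.

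The first step is to establish the one-step recursion $M_s(x,A)=s\delta_x(A)+s\int M(x,\rd y)M_s(y,A)$, which follows by peeling off the $n=1$ term of the defining series and rewriting the tail via Chapman--Kolmogorov. Substituting the atom decomposition yields
\[M_s(x,A)=s\delta_x(A)+s\int m(x,\rd y)M_s(y,A)+sg(x)\alpha_s(A),\]
i.e., the recursion satisfied by $m_s$ plus a rank-one ``regeneration'' inhomogeneity proportional to $\alpha_s(A)$. I would then verify the ansatz $M_s(x,A)=m_s(x,A)+h_s(x)\alpha_s(A)$ by direct substitution; the verification reduces to the identity $sg(x)+s\int m(x,\rd y)h_s(y)=h_s(x)$, which itself follows by expanding $h_s$ through the series defining $m_s$. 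Non-negativity of all kernels makes every interchange of sums and integrals automatic via monotone convergence, and uniqueness of the solution follows from the geometric convergence of the iteration under $f(s)<1$.

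The second step closes the system by integrating the ansatz against $\gamma(\rd x)$ and using the identities $\int h_s(x)\gamma(\rd x)=f(s)$ and $\int m_s(x,A)\gamma(\rd x)=\pi_s(A)$ recorded just before the lemma, which yields the scalar relation $\alpha_s(A)=\pi_s(A)+f(s)\alpha_s(A)$. Because $f(s)<1$, this solves uniquely as $\alpha_s(A)=\pi_s(A)/(1-f(s))$; plugging back into the ansatz produces \eqref{Ks}. I do not foresee any serious technical obstacle: the only thing to watch is that the condition $f(s)<1$ is precisely what makes the rank-one perturbation invertible, so it enters the argument only at the final step when $\alpha_s$ is determined.
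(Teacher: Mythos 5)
Your overall strategy is the same regeneration idea as the paper's: isolate the rank-one part $g(x)\gamma(\rd y)$ of \eqref{mrg}, obtain $M_s=m_s+h_s\,\alpha_s$ with $\alpha_s(A)=\int M_s(y,A)\gamma(\rd y)$, and then close the system by the scalar equation $\alpha_s(A)=\pi_s(A)+f(s)\alpha_s(A)$, solvable because $f(s)<1$. Your second step coincides with the paper's. The difference lies in how the intermediate identity $M_s=m_s+h_s\alpha_s$ is justified: the paper iterates the one-step recursion for $M^n$ finitely many times to get the exact decomposition $M^n(x,A)=m^n(x,A)+\sum_{i=1}^n\int g(y)m^{i-1}(x,\rd y)\int M^{n-i}(z,A)\gamma(\rd z)$ and then sums term by term (all terms non-negative, so no convergence issues), whereas you verify an ansatz against the generating-function recursion and appeal to uniqueness of its solution.

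That uniqueness appeal is a genuine gap. The relevant homogeneous equation is $w(x,A)=s\int w(y,A)m(x,\rd y)$, and the hypothesis $f(s)<1$ controls only the scalar return quantity at the atom, not the operator of integration against $s\,m$; it does not force $s^n m^n\to0$, and the homogeneous equation can have nonzero non-negative solutions even when $f(s)<1$. The Remark after Theorem \ref{PF} essentially furnishes an example: with $m(x,A)=g_1(x)\gamma_1(A)$, $\int g_1\,\rd\gamma_1=a_1>a=\int g\,\rd\gamma$ and vanishing cross-integrals, one has $f(s)=as<1$ at $s=1/a_1$, yet $w(x,A)=g_1(x)c(A)$ solves the homogeneous equation for every measure $c$. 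So ``the solution is unique'' is false as stated; your verification only shows that $m_s+h_s\alpha_s$ is the \emph{minimal} non-negative solution, which gives the inequality $M_s\ge m_s+h_s\alpha_s$, not equality. To close the gap, either sum the paper's exact finite-$n$ identity, or iterate your recursion $n$ times to write $M_s$ as the $n$-th partial sum of the minimal solution plus the remainder $s^n\int M_s(y,A)m^n(x,\rd y)$, and bound this remainder by $\sum_{k>n}s^kM^{k-1}(x,A)$, the tail of the series defining $M_s(x,A)$, which tends to zero whenever $M_s(x,A)<\infty$ (the case $M_s(x,A)=\infty$ then needs a separate one-line check).
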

\begin{proof}
 By \eqref{mrg}, we have the recursion
%
\begin{align*}
M^n(x,A)&=g(x)\int M^{n-1}(y,A)\gamma(\rd y)+\int M^{n-1}(y,A)m(x,\rd y)
\\
&=g(x)\int M^{n-1}(y,A)\gamma(\rd y)+\int g(y)m(x,\rd y)\int M^{n-2}(z,A)\gamma(\rd z)\\
&\quad +\int M^{n-2}(z,A)m^2(x,dz)\\
 &=\sum_{i=1}^{n}\int g(y)m^{i-1}(x,\rd y)\int M^{n-i}(y,A)\gamma(\rd y)+m^{n}(x,A),
\end{align*}
which 
in terms of generating functions  gives
 \begin{align*}
M_s(x,A)&=m_s(x,A)+h_s(x)\int M_s(y,A)\gamma(\rd y),
\end{align*}
and after integration,
 \begin{align*}
\int M_s(x,A)\gamma(\rd x)&={\pi_s(A)\over 1-f(s)}.
\end{align*}
Combining the last two relations we get
\eqref{Ks}. Observe also that the last formula yields \eqref{kFf}.

\end{proof}

\begin{lemma} \label{Fe} 
Let 
$$a(s)=\sum_{n=0}^\infty a_ns^n,\quad b(s)=\sum_{n=0}^\infty b_ns^n,\quad c(s)=\sum_{n=0}^\infty c_ns^n,$$ 
be three generating functions for non-negative sequences connected by
$$c(s)={b(s)\over1-a(s)}.$$
If  sequence $\{a_n\}$ is aperiodic with $a(1)=1$, $a'(1)\in(0,\infty)$, then 
 \[c_n\to 
   \dfrac {b(1)}{a'(1)},\quad n\to\infty.
\]
\end{lemma}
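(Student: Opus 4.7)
The plan is to recognize the functional equation $c(s)=b(s)/(1-a(s))$ as the generating-function form of a discrete renewal equation, and then to invoke the classical Erd\H{o}s--Feller--Pollard renewal theorem together with a dominated-convergence argument.

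First, I would expand
\[
c(s)=b(s)\sum_{k=0}^\infty a(s)^k=b(s)u(s),\qquad u(s):=\frac{1}{1-a(s)}=\sum_{n=0}^\infty u_ns^n,
\]
and read off the convolution identity $c_n=\sum_{k=0}^n b_k u_{n-k}$. The plan thus reduces to two separate questions: the asymptotics of $u_n$, and the passage to the limit under the convolution.

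Next, I would apply the discrete renewal theorem to $\{u_n\}$. Since $\{a_n\}_{n\ge 1}$ is aperiodic and $a(1)=1$, the sequence $(a_n)_{n\ge 1}$ is a probability distribution on $\{1,2,\ldots\}$ with finite mean $a'(1)\in(0,\infty)$; the mild nuisance of $a_0>0$ can be absorbed by writing $1/(1-a(s))=(1-a_0)^{-1}\big/\bigl(1-\tilde a(s)\bigr)$ with $\tilde a(s)=(a(s)-a_0)/(1-a_0)$ supported on $\{1,2,\ldots\}$ of mean $a'(1)/(1-a_0)$, and the two $(1-a_0)$-factors cancel in the final answer. The renewal theorem then yields $u_n\to 1/a'(1)$ as $n\to\infty$, along with uniform boundedness $u_n\le C$ for some constant $C$.

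Finally, I would let $n\to\infty$ in $c_n=\sum_{k=0}^\infty b_ku_{n-k}\mathbf{1}_{\{k\le n\}}$. If $b(1)<\infty$, dominated convergence with majorant $Cb_k$ gives $c_n\to (1/a'(1))\sum_k b_k=b(1)/a'(1)$; if $b(1)=\infty$, Fatou's lemma gives $\liminf_n c_n=\infty$, which is still the stated limit. The only genuine analytic content is the Erd\H{o}s--Feller--Pollard theorem for $\{u_n\}$; everything else is bookkeeping, and I do not anticipate any substantive obstacle.
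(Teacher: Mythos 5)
Your proposal is correct and follows essentially the same route as the paper: the paper's proof of this lemma is simply a citation of Chapter XIII.4 of Feller, and what you have written out (the expansion $c=b\cdot u$ with $u=1/(1-a)$, the Erd\H{o}s--Feller--Pollard theorem giving $u_n\to 1/a'(1)$ with $u_n$ bounded, and dominated convergence for the convolution) is precisely the standard derivation contained there. Your handling of the case $a_0>0$ by renormalizing to $\tilde a$ supported on $\{1,2,\ldots\}$ is the right bookkeeping and introduces no gap.
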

\begin{proof}
 This is a well-known result from Chapter XIII.4 in \cite{Fe1}.
\end{proof}

\

\noindent{\sc Proof of Theorem \ref{PF}}. $R$-positive recurrence implies  $f(R)=1$ and $f'(R)\in(0,\infty)$.
Due to $f(R)=1$, we can rewrite \eqref{Ks} as 
\[M_{\hat s}(x,A)-m_{\hat s}(x,A)={b(s)\over1-a(s)},\]
 where $\hat s=sR$ and
\[a(s)=f(sR),\quad b(s)=h_{\hat s}(x)\pi_{\hat s}(A),\]
so that $a'(1)=Rf'(R)$, $b(1)=h(x)\pi(A)$. Applying Lemma \ref{Fe}, we find that as $n\to\infty$,
 \[R^{n}(M^{n}(x,A)-m^{n}(x,A))\to 
\dfrac {h(x)\pi(A)}{R^2f'(R)}.
\]
which combined with condition \eqref{extra} yields the main assertion. The stated sufficient condition for  \eqref{extra} is verified using
\[\sum_{n=1}^\infty R^nm^{n-1}(x,A)\le \sum_{n=1}^\infty R^n\int1_{\{y:g(y)>\epsilon\}}m^{n-1}(x,dy)\le\epsilon^{-1}h(x)<\infty.
\]
\hfill$\Box$

\noindent{\bf Remark}. To illustrate the role of the condition \eqref{extra}, consider the kernel \eqref{mrg}  with
$$m(x,A)=g_1(x)\gamma_1(A),$$
assuming 
$$\int g_1(x)\gamma_1(dx)=a_1,\quad \int g(x)\gamma(dx)=a,\quad \int g_1(x)\gamma(dx)= \int g(x)\gamma_1(dx)=0,$$
where $a_1>a>0$.
In this particular case, we have
$$M^n(x,A)=m^n(x,A)+a^ng(x)\gamma(A),\quad m^n(x,A)=a_1^ng_1(x)\gamma_1(A),$$
and clearly,
\[
M^n(x,A)\sim\left\{
\begin{array}{ll}
 a_1^ng_1(x)\gamma_1(A),  &  \text{if }g_1(x)\gamma_1(A)>0,   \\
 a^ng(x)\gamma(A), &   \text{if }g_1(x)\gamma_1(A)=0 \text{ and }g(x)\gamma(A)>0.
 \end{array}
\right.
\]
Turning to the generating function defined by \eqref{eqR} we find
 \[
F_n= \iint g(y)M^{n-1}(x,\rd y)\gamma(d x)=a^{n+1},\quad F(s)={a^2s\over 1-as}.
\]
This yields $R=a^{-1}$ and we see that condition \eqref{extra} is not valid for $(x,A)$ such that $g_1(x)\gamma_1(A)>0$. On the other hand, if $g(x)<\infty$ and $A$ satisfies \eqref{eps}, then 
\[0=\int g(x)\gamma_1(dx)\ge \int_A g(x)\gamma_1(dx)\ge \epsilon \gamma_1(A), \]
so that $\gamma_1(A)=0$ and therefore $R^nM^n(x,A)\to g(x)\gamma(A)$.

\section{3-parameter GW process with clusters
} \label{xmpl}

Here we construct a transparent example  of a GW process  with clusters having the type space $E = [0, \infty)$. Its positive recurrent reproduction kernel 
is fully specified by just  three parameters $a,c\in(0,\infty)$, and $b\in(-1,\infty)$:
\[M(x,\rd y)=ae^{x-y}1_{\{y\ge x\}}\rd y+ce^{-bx}\delta_0(\rd y).\]
This kernel satisfies \eqref{mrg} with 
\begin{equation}\label{emgg}
 m(x,\rd y)=ae^{x-y}1_{\{y\ge x\}}\rd y,\quad g(x)=ce^{-bx},\quad \gamma(A) =  \delta_0(A),
\end{equation}
implying that each cluster consists of a single particle of type 0. 

The full specification of our example refers to a continuous time  Markov branching process modeling the size of a population of {\it Markov particles}
having the unit life-length mean and offspring mean $a$. 
The main idea is to count the Markov particles generation-wise, and to define the type of a Galton-Watson particle as the birth-time of the corresponding  Markov particle. The corresponding stem process $\{\xi_n\}_{n\ge0}$ is defined by 
\begin{quote}
$\xi_n(A)=$ the number of $n$-generation Markov particles born in the time period $A$, 
\end{quote}
so that its conditionally on the parent's birth time $x$, 
 \[m^n(x,[0,t])=  a^n \mathbb P(x+T_1+\ldots+T_n\le t)=  a^n \mathbb P(N_{t-x}\ge n), \quad \text{for }t>x,
  \]
  where  $T_i$ are independent exponentials with unit mean and $\{N_t\}_{t\ge0}$ is the standard Poisson process. 

\begin{proposition}
Consider the above described multi-type GW process with clusters characterised by \eqref{emgg}. Then we have
 \begin{align}\label{tre}
f(s)
={rcs\over r- s}, \qquad
r={1+b\over a}, \qquad R={r\over 1+cr}.
\end{align}
The process is supercritical if  $c>{r-1\over r}$, critical if $c={r-1\over r}$, and subcritical if $c<{r-1\over r}$.

Convergence \eqref{pefr} holds  for $A=[0,t]$, $t\in [0,\infty)$, with the right hand side equal to
 \[e^{-bx}(R\delta_0(\rd y)+aR^2e^{(aR-1)y}\rd y).\]
If  $Ra<1$, then \eqref{pefr} holds even for $A=E$ with  the right hand side equal to ${Re^{-bx}\over 1-aR}$.

\end{proposition}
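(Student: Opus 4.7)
The plan is to exploit the exact Gamma structure of the stem process to compute every ingredient of Theorem \ref{PF} in closed form. Conditionally on a parent of type $x$, the $n$-th generation Markov particles are born at $x+T_1+\ldots+T_n$ with i.i.d.\ unit exponentials $T_i$, so
\[m^n(x,\rd y)=a^n\,\frac{(y-x)^{n-1}e^{-(y-x)}}{(n-1)!}\,\rd y,\qquad y\ge x,\ n\ge 1,\]
together with $m^0(x,\cdot)=\delta_x$.

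First I would substitute this expression and $g(y)=ce^{-by}$ into $f_n=\int g(y)m^{n-1}(0,\rd y)$; the resulting Gamma integral produces $f_n=c\,r^{-(n-1)}$ with $r=(1+b)/a$, and summing the geometric series gives $f(s)=crs/(r-s)$. Since $f(r^-)=\infty$, Definition \ref{dR} forces $R$ to be the unique positive solution of $f(R)=1$, i.e.\ $R=r/(1+cr)$. The criticality trichotomy then reduces to $R\lessgtr 1\iff c\gtrless (r-1)/r$. At this point I would do a brief side check of the hypotheses of Theorem \ref{PF}: irreducibility (with maximal measure $\delta_0$ plus Lebesgue on $(0,\infty)$), aperiodicity (since $M(0,\{0\})=c>0$ yields a self-transition at the atom), and $R$-positive recurrence, which follows from Theorem \ref{iv}(iii) because $f(r)=\infty$ forces $R<r$ and hence $f'(R)<\infty$.

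Next I would apply the same geometric summation to \eqref{hx} and \eqref{pia}. Carried out from a general $x$, the sum for $h$ yields $h(x)=cRe^{-bx}\cdot r/(r-R)=e^{-bx}f(R)=e^{-bx}$. For $\pi$ the $n=1$ term contributes the atom $R\delta_0$ at $0$, and in the $n\ge 2$ tail, interchanging sum and integral recognises an exponential series, producing the density $aR^2 e^{(aR-1)y}$. A short calculation using the identity $r-R=cr^2/(1+cr)$ then supplies the normalising constant $R^2 f'(R)$ appearing in \eqref{pefr}.

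Finally I would verify the decay condition \eqref{extra}. For $A=[0,t]$ the function $g(y)=ce^{-by}$ is continuous and strictly positive on the compact interval $[0,t]$, hence bounded below by some $\epsilon>0$, so the sufficient condition \eqref{eps} applies. For $A=E$ the direct calculation $m^n(x,E)=a^n$ (using $\int_x^\infty a e^{x-y}\rd y=a$) reduces \eqref{extra} to $(Ra)^n\to 0$, which is precisely the added hypothesis $Ra<1$. Theorem \ref{PF} then delivers the claimed limits. I expect no real conceptual obstacle; the main care needed is in bookkeeping the power-series identities and the algebraic simplifications around $r-R$, together with tracking how the normaliser $R^2 f'(R)$ combines with $h(x)\pi(\rd y)$ to produce the stated closed-form limit.
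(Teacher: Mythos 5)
Your route is essentially the paper's: compute $f$, $R$, $h$, $\pi$ in closed form and feed them into Theorem \ref{PF}. The only technical difference is that you sum the explicit Gamma densities $m^n(x,\rd y)=a^n\tfrac{(y-x)^{n-1}}{(n-1)!}e^{-(y-x)}\rd y$ term by term, whereas the paper computes the generating kernel $m_s(x,\rd y)=s\delta_x(\rd y)+as^2e^{(as-1)(y-x)}1_{\{y\ge x\}}\rd y$ in one stroke via $\rE[(as)^{N_t}]$ and reads off $h_s$, $\pi_s$ and $f$ from it; the two computations are interchangeable and yield the same $f(s)=crs/(r-s)$, $R=r/(1+cr)$, $h(x)=e^{-bx}$ and $\pi(\rd y)=R\delta_0(\rd y)+aR^2e^{(aR-1)y}\rd y$. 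Your explicit checks of aperiodicity, $R$-positive recurrence, and of condition \eqref{extra} (via \eqref{eps} for $A=[0,t]$, via $m^n(x,E)=a^n$ for $A=E$) address hypotheses the paper's proof leaves implicit, and they are correct.

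The one step you defer --- ``a short calculation \ldots{} supplies the normalising constant $R^2f'(R)$'' --- is exactly where you should slow down. From $f(s)=crs/(r-s)$ one gets $f'(s)=cr^2/(r-s)^2$, and since $f(R)=1$ gives $r-R=crR$, this yields $R^2f'(R)=1/c$, not $1$. Hence Theorem \ref{PF} produces the limit $c\,h(x)\pi(A)=ce^{-bx}\bigl(R\delta_0(\rd y)+aR^2e^{(aR-1)y}\rd y\bigr)$, which agrees with the display in the Proposition only when $c=1$; likewise the $A=E$ limit should read $cRe^{-bx}/(1-aR)$. A quick consistency check confirms this: $F(s)=f(s)/(1-f(s))=cs/(1-s/R)$, so $F_{n+1}=cR^{-n}$ exactly and $R^{n}\int g(y)M^{n}(0,\rd y)=c$ for every $n$, whereas integrating the Proposition's stated limit against $g$ gives $\int g\,\rd\pi=1$. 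The missing factor of $c$ is a slip in the paper's own proof, which applies Theorem \ref{PF} without dividing by $R^2f'(R)$ (its side verification of $\int h_s\,\rd\pi_s=s^2f'(s)$ also drops a $c$ in an intermediate line, masking the discrepancy). So your method is sound, but your claim that the bookkeeping ``produces the stated closed-form limit'' will not survive the actual computation unless you carry the factor $c$ through.
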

\begin{proof}
Referring to the underlying Poisson process, we find that for $s\ne1/a$,
\begin{align*}
m_s(0,[0,t])&= s\sum _{n=0}^\infty s^n a^{n}\sum _{k=n}^\infty \mathbb P(N_{t}=k)= s \sum _{k=0}^\infty  \mathbb P(N_{t}=k){ 1-(as)^{k+1} \over 1-as}\\
&  = { s  \over 1-as}(1- as\mathbb E(as)^{N_t})
= { s  \over 1-as}(1- ase^{t(as-1)}).
\end{align*}
More generally, we have
$$
m_s(x,[0,t])= m_s(0,[0,t-x])= { s \over 1-as}(1- ase^{(t-x)(as-1)})1_{\{t\ge x\}} ,
$$
so that
\begin{align*}
m_s(x,\rd y)=s\delta_x(\rd y)+as^2e^{(as-1)(y-x)}1_{\{y\ge x\}}\rd y.
\end{align*}

By \eqref{Kus}
\begin{align*}
h_s(x)=\int g(y)m_s(x,\rd y)= sce^{-bx}+ cas^2\int_{ 0}^\infty  e^{h(as-1)}e^{-b(x+u)}du
=f(s)e^{-bx},
\end{align*}
where $f(s)$ satisfies \eqref{tre}. Since $f(r)=\infty$, the stated value $R={r\over 1+cr}$ is found from the equation $f(R)=1$.

Applying once again \eqref{Kus}, we find 
\begin{align*}
\pi_s(\rd y)=\int m_s(x,\rd y)\gamma(\rd x)=m_s(0,\rd y)=s\delta_0(\rd y)+as^2e^{(as-1)y}\rd y.
\end{align*}
To check this and previously obtained expressions, we verify the general  formula for the integral
\begin{align*}
 \int h_s(x)\pi_s(\rd x)&=sf(s)+ f(s)as^2\int e^{-(1+b)x}e^{asx}\rd x={rsf(s)\over r-s}={r^2s^2\over (r-s)^2}=s^2f'(s).
 \end{align*}
 
With
\begin{align*}
h(x)&= e^{-bx},\qquad \pi(\rd x)=R\delta_0(\rd x)+aR^2e^{(aR-1)x}\rd x,
\end{align*}
Theorem \ref{PF} specialised to the current example says that for $t\in [0,\infty)$,
\begin{align*}
R^{n}M^n(x,[0,t])&\to e^{-bx}(R+aR^2\int_0^t e^{(aR-1)y}\rd y)\\
&=e^{-bx}(R+ {aR^2\over aR-1}(e^{(aR-1)t}-1))=e^{-bx}{aR^2e^{(aR-1)t}-R\over aR-1},\quad n\to\infty.
 \end{align*}
If $aR<1$ and $A=E$, then  condition \eqref{extra} holds since
$$\pi(E)=R+{aR^2\over 1-aR}={R\over 1-aR}<\infty,$$ 
and
$R^{n}m^n(x,E)=(Ra)^n\to0$.
\end{proof}

\noindent {\bf Remark.} If we further specialize this example by letting the stem process to be the Yule process, then we have $a=2$. If furthermore, $b=2$ and $c<{r-1\over r}={1\over 3}$, then the corresponding GW process with clusters is subcritical, despite the total number of particles in the Yule process is infinite.\\

\noindent {\bf Acknowledgements.} The author is grateful for critical remarks of an anonymous reviewer of an earlier version of the paper.

\end{document}